\numberwithin{equation}{section}
\newtheorem{theorem}{Theorem}[section]
\newtheorem{lemma}[theorem]{Lemma}
\newtheorem{remark}[theorem]{Remark}
\begin{document}

\newcommand{\E}{\mathbb{E}}
\renewcommand{\P}{\mathbb{P}}
\newcommand{\R}{\mathbb{R}}
\renewcommand{\a}{\alpha}
\renewcommand{\b}{\beta}
\newcommand{\ep}{\varepsilon}
\newcommand{\tep}{\tilde\varepsilon}
\newcommand{\hep}{\hat\varepsilon}
\newcommand{\noin}{\noindent}
\newcommand{\De}{\Delta}
\newcommand{\la}{\lambda}
\newcommand{\La}{\Lambda}
\newcommand{\de}{\delta}
\newcommand{\tgl}{\tilde\lambda}
\newcommand{\si}{\sigma}
\newcommand{\Ga}{\Gamma}
\newcommand{\ga}{\gamma}
\newcommand{\Si}{\Sigma}
\renewcommand{\o}{\omega}
\renewcommand{\O}{\Omega}
\newcommand{\ol}{\overline}
\newcommand{\bXi}{{\mbox{\boldmath$\Xi$}}}
\newcommand{\bbxi}{{\mbox{\boldmath$\xi$}}}
\newcommand{\bbphi}{{\mbox{\boldmath$\phi$}}}
\newcommand{\bbPhi}{{\mbox{\boldmath$\Phi$}}}
\newcommand{\bbep}{{\mbox{\boldmath$\ep$}}}
\newcommand{\bgS}{{\mbox{\boldmath$\Sigma$}}}
\newcommand{\bga}{{\mbox{\boldmath$\alpha$}}}
\newcommand{\bbgd}{{\mbox{\boldmath$\delta$}}}
\newcommand{\bbtau}{{\mbox{\boldmath$\tau$}}}
\newcommand{\bbPsi}{{\mbox{\boldmath$\Psi$}}}
\newcommand{\bgma}{{\mbox{\boldmath$\gamma$}}}
\newcommand{\bbgG}{{\mbox{\boldmath$\Gamma$}}}
\newcommand{\btau}{{\mbox{\boldmath$\tau$}}}
\newcommand{\brho}{{\mbox{\boldmath$\rho$}}}
\newcommand{\bbgs}{{\mbox{\boldmath$\gs$}}}
\newcommand{\bbeta}{{\mbox{\boldmath$\beta$}}}
\newcommand{\bbzeta}{{\mbox{\boldmath$\zeta$}}}
\newcommand{\bbpsi}{{\mbox{\boldmath$\psi$}}}
\newcommand{\bbtheta}{{\mbox{\boldmath$\theta$}}}
\newcommand{\bdeta}{{\mbox{\boldmath$\eta$}}}
\newcommand{\bgl}{{\mbox{\boldmath$\lambda$}}}
\newcommand{\bbell}{{\mbox{\boldmath$\ell$}}}
\newcommand{\tell}{\tilde\ell}
\newcommand{\tbbell}{\widetilde{\mbox{\boldmath$\ell$}}}
\newcommand{\bzeta}{{\mbox{\boldmath$\zeta$}}}
\newcommand{\rtr}{{\rm tr}}
\newcommand{\Cov}{{\rm Cov}}
\newcommand{\rk}{{\rm rank}}
\newcommand{\sgn}{{\rm sgn}}
\newcommand{\ctan}{{\rm ctan}}
\newcommand{\rank}{{\rm rank}}
\newcommand{\rE}{{\rm E}}
\newcommand{\RRe}{{\rm Re}}
\newcommand{\IIm}{{\rm Im}}
\newcommand{\txi}{\tilde\xi}
\newcommand{\bxi}{{\mbox{\boldmath$\xi$}}}
\newcommand{\fii}{\frac{1}{2}}
\newcommand{\fnn}{\frac1{\sqrt n}}
\newcommand{\fn}{\frac{1}{n}}
\newcommand{\siln}{\sum_{i=1}^n}
\newcommand{\skln}{\sum_{k=1}^n}
\newcommand{\nskln}{\frac{1}{n}\sum_{k=1}^n}
\newcommand{\nsjln}{\frac{1}{n}\sum_{j=1}^n}
\newcommand{\sjln}{\sum_{j=1}^n}
\newcommand{\sklp}{\sum_{k=1}^p}
\newcommand{\psklp}{\frac{1}{p}\sum_{k=1}^p}
\newcommand{\nin}{\not\in}
\newcommand{\non}{\nonumber\\}
\newcommand{\ox}{{\overline x}}
\newcommand{\oy}{{\overline y}}
\newcommand{\hx}{{\widehat x}}
\newcommand{\oF}{{\overline F}}
\newcommand{\oX}{{\overline X}}
\newcommand{\obX}{{\overline \bfX}}
\newcommand{\oW}{{\overline \bXi}}
\newcommand{\omu}{{\overline \mu}}
\newcommand{\bbmu}{{\mbox{\boldmath$\mu$}}}
\newcommand{\bbnu}{{\mbox{\boldmath$\nu$}}}
\newcommand{\bpsi}{{\mbox{\boldmath$\Psi$}}}
\newcommand{\bsigma}{{\mbox{\boldmath$\sigma$}}}
\newcommand{\oox}{{\overline{\overline x}}}
\newcommand{\ooW}{{\overline{\overline \bXi}}}
\newcommand{\oomu}{{\overline{\overline \mu}}}
\newcommand{\tomu}{{\widetilde{\overline \mu}}}
\newcommand{\tox}{{\widetilde{\overline x}}}
\newcommand{\toXi}{{\widetilde{\overline \bXi}}}
\newcommand{\darrow}{\stackrel{\cal D}\to}
\def\iparrow{\buildrel i.p.\over\longrightarrow}
\def\asarrow{\buildrel a.s.\over\longrightarrow}
\newcommand{\tb}{{\tilde \bfb}}
\newcommand{\tx}{{\tilde x}}
\newcommand{\tnab}{{\tilde \nabla}}
\newcommand{\tXi}{{\tilde \bXi}}
\newcommand{\diag}{{\rm diag}}
\newcommand{\rP}{{\rm P}}
\newcommand{\rVar}{{\rm Var}}
\newcommand{\rCov}{{\rm Cov}}
\newcommand{\bfA}{{\bf A}}
\newcommand{\bfa}{{\bf a}}
\newcommand{\bfB}{{\bf B}}
\newcommand{\bfb}{{\bf b}}
\newcommand{\bfC}{{\bf C}}
\newcommand{\bfc}{{\bf c}}
\newcommand{\bfD}{{\bf D}}
\newcommand{\bfd}{{\bf d}}
\newcommand{\bfe}{{\bf e}}
\newcommand{\bfE}{{\bf E}}
\newcommand{\bff}{{\bf f}}
\newcommand{\bfF}{{\bf F}}
\newcommand{\bfg}{{\bf g}}
\newcommand{\bfG}{{\bf G}}
\newcommand{\bfH}{{\bf H}}
\newcommand{\bfh}{{\bf h}}
\newcommand{\bfI}{{\bf I}}
\newcommand{\bfi}{{\bf i}}
\newcommand{\bfj}{{\bf j}}
\newcommand{\bfJ}{{\bf J}}
\newcommand{\bfk}{{\bf k}}
\newcommand{\bfK}{{\bf K}}
\newcommand{\bfl}{{\bf 1}}
\newcommand{\bfL}{{\bf L}}
\newcommand{\bfM}{{\bf M}}
\newcommand{\bfm}{{\bf m}}
\newcommand{\bfn}{{\bf n}}
\newcommand{\bfN}{{\bf N}}
\newcommand{\bfQ}{{\bf Q}}
\newcommand{\bfp}{{\bf p}}
\newcommand{\bfP}{{\bf P}}
\newcommand{\bfq}{{\bf q}}
\newcommand{\bfO}{{\bf O}}
\newcommand{\bfR}{{\bf R}}
\newcommand{\bfr}{{\bf r}}
\newcommand{\bfs}{{\bf s}}
\newcommand{\bfS}{{\bf S}}
\newcommand{\bfY}{{\bf Y}}
\newcommand{\bfy}{{\bf y}}
\newcommand{\bfZ}{{\bf Z}}
\newcommand{\bfz}{{\bf z}}
\newcommand{\tbbS}{\widetilde{\bf S}}
\newcommand{\hbbS}{\widehat{\bf S}}
\newcommand{\obbS}{\overline{\bf S}}
\newcommand{\bfT}{{\bf T}}
\newcommand{\bft}{{\bf t}}
\newcommand{\hbbT}{\widehat{\bf T}}
\newcommand{\tbbT}{\widetilde{\bf T}}
\newcommand{\obT}{{\overline{\bf T}}}
\newcommand{\bfU}{{\bf U}}
\newcommand{\bfu}{{\bf u}}
\newcommand{\bfV}{{\bf V}}
\newcommand{\bfv}{{\bf v}}
\newcommand{\tbbv}{\widetilde{\bf v}}
\newcommand{\bfw}{{\bf w}}
\newcommand{\bfW}{{\bf W}}
\newcommand{\tbbW}{\widetilde{\bf W}}
\newcommand{\hbbB}{\widehat{\bf B}}
\newcommand{\hbbW}{\widehat{\bf W}}
\newcommand{\bfX}{{\bf X}}
\newcommand{\tbbB}{\widetilde {\bf B}}
\newcommand{\tbbX}{\widetilde {\bf X}}
\newcommand{\hbbX}{\widehat {\bf X}}
\newcommand{\bfx}{{\bf x}}
\newcommand{\obbx}{{\overline{\bf x}}}
\newcommand{\tbbx}{{\widetilde{\bf x}}}
\newcommand{\hbbx}{{\widehat{\bf x}}}
\newcommand{\obby}{{\overline{\bf y}}}
\newcommand{\hbbY}{{\widehat{\bf Y}}}
\newcommand{\tbbY}{{\widetilde{\bf Y}}}
\newcommand{\tbby}{{\tilde{\bf y}}}
\newcommand{\bbI}{{\mathbb I}}
\newcommand{\cC}{{\cal C}}
\newcommand{\cD}{{\cal D}}
\newcommand{\cE}{{\cal E}}
\newcommand{\cF}{{\cal F}}
\newcommand{\cG}{{\cal G}}
\newcommand{\cH}{{\cal H}}
\newcommand{\cI}{{\cal I}}
\newcommand{\cM}{{\cal M}}
\newcommand{\cP}{{\cal P}}
\newcommand{\cQ}{{\cal Q}}
\newcommand{\cR}{{\cal R}}
\newcommand{\cS}{{\cal S}}
\newcommand{\cT}{{\cal T}}
\newcommand{\cX}{{\cal X}}
\newcommand{\cU}{{\cal U}}
\newcommand{\cV}{{\cal V}}
\newcommand{\Th}{{\bfT^{1/2}}}
\newcommand{\um}{{\underline m}}
\newcommand{\us}{{\underline s}}
\newcommand{\umn}{{\underline m}_n}
\newcommand{\ua}{{\underline a}}
\newcommand{\uF}{{\underline F}}
\newcommand{\gma}{\gamma}

\renewcommand{\(}{\left(}
\renewcommand{\)}{\right)}
\newcommand{\lj}{\left|}
\newcommand{\rj}{\right|}
\newcommand{\lb}{\label}
\newcommand{\no}{\nonumber}
\newcommand{\hi}{ H\"{o}lder's inequality }

\title[Convergence Rates]{A Note  on  Rate of Convergence in Probability to Semicircular Law}

\begin{abstract}
In the present paper, we prove that under the assumption of the finite  sixth moment for elements of a Wigner matrix, the convergence rate of its empirical spectral distribution to the Wigner semicircular  law in probability is  $O(n^{-1/2})$ when the dimension $n$ tends to infinity.
\end{abstract}
\author{Zhidong Bai}
\address{KLASMOE and School of Mathematics \& Statistics, Northeast Normal University, Changchun, 130024, P.R.C.\\
and Department of Statistics and Applied Probability, National University of Singapore}
\email{baizd@nenu.edu.cn}

\author{Jiang Hu}
\address{KLASMOE and School of Mathematics \& Statistics, Northeast Normal University, Changchun, 130024, P.R.C..}
\email{huj156@nenu.edu.cn}

\author{Guangming Pan}
\address{Division of Mathematical Sciences, School of Physical and Mathematical Sciences, Nanyang Technological University.}
\email{gmpan@ntu.edu.sg}

\author{Wang Zhou}
\address{Department of Statistics and Applied Probability, National University of Singapore, Singapore 117546}
\thanks{Z. D. Bai was partially supported by CNSF 10871036. J. Hu was partially supported by the Fundamental Research Funds for the Central Universities 10ssxt149.
 W. Zhou  was partially supported  by grant
          R-155-000-106-112  at
the National University of Singapore}
\email{stazw@nus.edu.sg}

\subjclass{Primary   60F15; Secondary 62H99}

\keywords{convergence rate, Wigner matrix, Semicircular Law,  spectral distribution}

\maketitle
\section{Introduction and the result.}
A Wigner matrix  $\bfW_n=n^{-1/2}\(x_{ij}\)_{i,j=1}^n$ is defined to be  a  Hermitian random matrix whose  entries on and above the diagonal are independent zero-mean random variables.   It is an important  model for depicting heavy-nuclei atoms, which begin with the seminal work of Wigner in 1955 (\cite{Wigner1955}). Details in this area can be found in \cite{Mehta2004}.

There are various mathematical tools in the study of Wigner matrices in the past half century (see \cite{AndersonG2010}). One of the most popular instruments is the limit theory of  empirical spectral distribution (ESD). Here, for any $n\times n$ matrix $\bfA$ with real eigenvalues, the ESD of $\bfA$ is defined by
\begin{align*}
    F^{\bfA}(x)=\frac{1}{n}\sum_{i=1}^nI(\la_i^{\bfA}\le x),
\end{align*}
where $\la_i^{\bfA}$ denotes the $i$-th smallest eigenvalue of $\bfA$ and $I(B)$  denotes the indicator function of an event $B$. It is proved that,under  assumptions of  for all $i,j$, $\E|x_{ij}|^2=\si^2$, the ESD $F^{\bfW_n}(x)$ converges almost surely  to a non-random distribution $F(x)$ which has the destiny function
\begin{align}
    f(x)=\frac{1}{2\pi\si}\sqrt{4\si^2-x^2},~~x\in[-2\si,2\si].\label{01}
\end{align}This is also known as the Wigner semicircular law (see \cite{Wigner1955}, \cite{BaiS2010}).

The rate of convergence  is  important  in establishing the central limit theorem for linear spectral statistics of Wigner matrices (\cite{BaiW2009,BaiS2010}).  There are some partial results in this area.  In \cite{Bai1993},   Bai  proved that under the assumption of $\sup_n\sup_{i,j}\E x_{ij}^4<\infty$, the rate of
$$\De_n=\| \E F^{\mathbf{W}_n}-F\|:=\sup_{x}| F^{\mathbf{W}_n}(x)-F(x)|$$
 tending to 0  is $O(n^{-1/4})$.  Bai et al. in \cite{BaiM1997} obtained  that the rate established in \cite{Bai1993} is still valid for
$$\De_p= \| F^{\mathbf{W}_n}-F\|:=\sup_{x}| F^{\mathbf{W}_n}(x)-F(x)|$$ Under a stronger condition that $\sup_n\sup_{i,j}\E x_{ij}^8<\infty$,  Bai et al. in \cite{BaiM2002} showed  that $\De_n=O(n^{-1/2})$ and $\De_p=O_p(n^{-2/5})$ (Bai  and  Silverstein improve this condition up to $\sup_n\sup_{i,j}\E x_{ij}^6<\infty$ in their book \cite{BaiS2010} ).  Later,  G\"{o}tze et al. in \cite{GotzeT2003} derived  $\De_n=O(n^{-1/2})$ as well assuming  fourth moment, and $\De_p=O_p(n^{-1/2})$  at the cost of the twelfth moment of the matrix entries. There are some other results with some special  assumptions on the matrix entries. For which one can  refer to \cite{GotzeT2005,GotzeT2007,Tikhomirov2009,BobkovG2010a}.

In this note we prove that the twelfth moment condition in \cite{GotzeT2003} could be  reduced to the sixth the moment assumption when getting $\De_p=O_p(n^{-1/2})$. 
Our main result of this paper is as follow.
\begin{theorem}\label{th1}
Assume that
\begin{itemize}
  \item $\E x_{ij}=0,\mbox{ for all }1\le i\le j\le n,$
  \item $\E |x_{ii}^2|=\si^2>0, \E |x_{ij}|^2=1, \mbox{ for all }1\le i< j\le n,$
  \item $ \sup_n\sup_{1\leq i<j\leq n}\E |x_{ii}^3|,\E |x_{ij}|^{6}<\infty.$
\end{itemize}
Then we have
 \begin{align}\label{1}
\De_p:=\| F^{\mathbf{W}_n}-F\|=O_p(n^{-1/2}).
 \end{align}
\end{theorem}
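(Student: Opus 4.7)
My plan is to use the Stieltjes transform method combined with Bai's integral (Berry--Esseen-type) inequality to reduce the bound on $\De_p$ to an estimate of the difference of Stieltjes transforms integrated along a horizontal line in the upper half-plane. Set $s_n(z)=\fn\rtr(\bfW_n-z\bfI)^{-1}$ and let $s(z)$ be the Stieltjes transform of the semicircular density in \eqref{01}. Bai's inequality (cf.\ Theorem B.14 of \cite{BaiS2010}) gives
\begin{equation*}
\De_p \;\le\; C\Bigl(\int_{-A}^{A}|s_n(u+iv)-s(u+iv)|\,du \;+\; v \;+\; \text{edge/tail terms}\Bigr)
\end{equation*}
for $A>2$ and $v>0$. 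Taking $v\sim n^{-1/2}$ forces the right-hand side to match the target rate, so the game becomes showing that the integral is $O_p(n^{-1/2})$.

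\textbf{Truncation and centralization.} Using the moment hypotheses I would truncate each off-diagonal entry at level $\eta_n\sqrt n$, with $\eta_n\downarrow 0$ chosen so that $n^2\P(|x_{ij}|>\eta_n\sqrt n)\to 0$ (possible by Markov and the uniform sixth-moment bound); the diagonal, with only three moments, needs a cruder truncation. After re-centering and re-scaling, the rank inequality $\|F^{\bfA}-F^{\bfB}\|\le n^{-1}\rk(\bfA-\bfB)$ together with a Frobenius-norm estimate shows that the $\De_p$-cost of the replacement is $o_p(n^{-1/2})$. Thereafter I may assume the entries are bounded by $\eta_n\sqrt n$ and still satisfy the required moment bounds.

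\textbf{Decomposition $s_n-s=(s_n-\E s_n)+(\E s_n-s)$.} For the fluctuation, I would use the martingale decomposition $s_n-\E s_n=\sjln(\E_k-\E_{k-1})s_n$, where $\E_k$ conditions on the first $k$ columns. The Sherman--Morrison/rank-one interlacing identity bounds each increment by $2/(nv)$ in absolute value, so Burkholder's inequality yields $\E|s_n-\E s_n|^2 =O((nv)^{-2})$; integrating the resulting pointwise estimate and using Chebyshev gives an $O_p(n^{-1/2})$ contribution at $v\sim n^{-1/2}$. For the bias, the self-consistent equation approach writes the $k$-th diagonal resolvent entry as
\begin{equation*}
[(\bfW_n-z\bfI)^{-1}]_{kk}=\Bigl[\tfrac{x_{kk}}{\sqrt n}-z-\bfw_k^*(\bfW_n^{(k)}-z\bfI)^{-1}\bfw_k\Bigr]^{-1},
\end{equation*}
where $\bfW_n^{(k)}$ is the $(n-1)\times(n-1)$ minor and $\bfw_k$ the deleted column. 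Replacing the quadratic form by $\fn\rtr(\bfW_n^{(k)}-z\bfI)^{-1}\approx\E s_n$ and averaging over $k$ produces $\E s_n(z)=-1/(z+\E s_n(z))+\de_n(z)$ with a controllable error; stability of the limiting quadratic equation then gives $|\E s_n-s|=O((nv)^{-1})$.

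\textbf{Main obstacle.} The decisive improvement from the twelfth moment of \cite{GotzeT2003} to the sixth one lies in the quadratic-form estimate $\bfw_k^*\bfM\bfw_k-\fn\rtr\bfM$, where $\bfM$ is a resolvent submatrix with $\|\bfM\|\le v^{-1}$. The previous argument uses Rosenthal-type inequalities of order larger than six, hence twelfth moments. Instead, I would only go to second order: after truncation, $\E|\bfw_k^*\bfM\bfw_k-\fn\rtr\bfM|^2\le C n^{-2}\E|x_{ij}|^4\rtr(\bfM\bfM^*)\le C(nv^2)^{-1}$, which uses only the fourth moment, and combine this with the martingale second-moment bound above. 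Since we only need an $O_p$-statement, Chebyshev suffices and no exponential concentration (and hence no high-moment inequality) is required. The delicate remaining points will be (i) handling the edge of the semicircle, where $1/(nv^2)$ is of order one when $v\sim n^{-1/2}$, perhaps by slightly enlarging $v$ near $u=\pm 2$ or by a direct tail estimate on the extreme eigenvalues of $\bfW_n$, and (ii) accounting for the diagonal entries with only three moments, whose contribution is of lower order by a rank-one argument.
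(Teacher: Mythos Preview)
Your overall architecture (Bai's inequality, truncation, the split $s_n-s=(s_n-\E s_n)+(\E s_n-s)$) matches the paper, and the bias term $\E s_n-s$ can indeed be handled by existing results (the paper simply quotes Theorem~8.2 of \cite{BaiS2010} to absorb it into the $O(n^{-1/2})$ in \eqref{f}). The genuine gap is in your treatment of the fluctuation $s_n-\E s_n$.

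The bound you state, $\E|s_n-\E s_n|^2=O((nv)^{-2})$, is not what the martingale argument gives. With $n$ martingale differences each bounded by $c/(nv)$ one obtains
\[
\E|s_n-\E s_n|^2\;\le\;\sum_{k=1}^n\E|d_k|^2\;\le\;\frac{C}{nv^2},
\]
not $C/(nv)^2$. Even the refined version that feeds in your quadratic-form estimate $\E|\bfw_k^*\bfM\bfw_k-\fn\rtr\bfM|^2\le C/(nv^2)$ only yields $\E|s_n-\E s_n|^2\le C/(n^2v^3)$ (this is exactly Lemma~\ref{4.3} with $l=1$). At $v=C_0n^{-1/2}$ these give $O(1)$ and $O(n^{-1/2})$ respectively, hence $\E|s_n-\E s_n|=O(1)$ or $O(n^{-1/4})$, and after integrating in $u$ you recover only the old $O_p(n^{-1/4})$ rate of \cite{BaiM1997}. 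Your remark~(i) in the ``Main obstacle'' paragraph in fact anticipates this: $1/(nv^2)$ is of order one at $v\sim n^{-1/2}$ for \emph{every} $u$, not just near the edge, so the difficulty cannot be localized to $u=\pm2$.

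What is missing is the paper's key Lemma~\ref{l1},
\[
\E|s_n-\E s_n|^2\;\le\;\frac{C}{n\,|z+2s(z)|^{2}},
\]
in which the troublesome $v$-power is replaced by $|z+2s(z)|=|\sqrt{z^2-4}|$. This factor is \emph{integrable} in $u$ (see \eqref{z}), so Cauchy--Schwarz gives $\int_{-16}^{16}\E|s_n-\E s_n|\,du\le Cn^{-1/2}$. The proof of this lemma does \emph{not} go through Burkholder; instead one writes $\E|s_n-\E s_n|^2=\E(\ol{s_n-\E s_n})\,s_n$ and expands $s_n$ via the Schur identity \eqref{sn1}, producing terms $S_1,\dots,S_4$. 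The algebra of the $S_4$ term (which contains a further copy of $s_n-\E s_n$) leads to the factor $(1-a_n^2)^{-1}\sim |a_n(z+2s)|^{-1}$, and it is precisely here that the sixth-moment condition enters, through the fourth-moment bound \eqref{ga4} on $\ga_i$ needed for \eqref{epi} and for controlling $S_{23}$. A generic martingale/Chebyshev argument cannot produce this $|z+2s|^{-2}$ gain, so as written your plan stalls at the $n^{-1/4}$ rate.
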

\begin{remark}
It is not clear what the exact rate and the optimal conditions are. As far as we know, the best known rate in the literature is $O(n^{-1/2})$.
\end{remark}


  The rest of this paper is organized as follows. The  main tool of proving the theorem is introduced in Section 2.  Theorem \ref{th1} is  proved in Section 3 and some technical lemmas are given in Section 4. Throughout this paper, constants appearing in inequalities are represented by $C$ which are nonrandom and may take different values
from one appearance to another.

\section{The main tool} Our main tool to prove the theorem is  a Berry-Esseen type  inequality  in \cite{Bai1993}.
\begin{lemma}(Bai inequality)\label{bai}
Let $F$ be a distribution function  and let $G$ be a function of bounded variation satisfying $\int|F(x)-G(x)|dx<\infty$. Denote their Stieltjes transforms by $s_{F}(z)$ and
$s_{G}(z)$ respectively, where $z=u+iv\in\mathbb{C}^{+}$. Then
\begin{align*}
\|F-G\|&\leq\frac{1}{\pi(1-\zeta)(2\rho-1)}\left(\int_{-A}^A|s_{F}(z)-s_{G}(z)|du\right.\\
&\quad+2\pi v^{-1}\int_{|x|>B}|F(x)-G(x)|dx\nonumber\\
&\qquad\left.,+v^{-1}\sup_x\int_{|u|\leq2v\epsilon}|G(x+u)-G(x)|du\right),
\end{align*}
where the constants  $A>B>0$, $\zeta$ and $\epsilon$ are restricted by
$ \rho=\frac{1}{\pi}\int_{|u|\leq
\epsilon}\frac{1}{u^2+1}du>\frac{1}{2}$, and  $
\zeta=\frac{4B}{\pi(A-B)(2\rho-1)}\in(0,1). $
\end{lemma}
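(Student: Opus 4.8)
The plan is to prove the inequality by a de-smoothing argument of Berry--Esseen type carried out on the horizontal line $\{\IIm z=v\}$. Write $H=F-G$ and $\ga=\|F-G\|=\sup_x|H(x)|$; we may assume $\ga>0$, and since $\int|H|\,dx<\infty$ together with $F$ being a distribution function forces $H(x)\to0$ as $x\to\pm\infty$, the value $\ga$ is attained (in a limiting sense at a jump). Let $p_v(x)=\dfrac{v}{\pi(x^2+v^2)}$ be the Cauchy kernel; the change of variables $x=vt$ shows $\int_{|x|\le v\ep}p_v(x)\,dx=\rho$, so the standing hypothesis $\rho>\tfrac12$ says precisely that strictly more than half the mass of $p_v$ lies in $[-v\ep,v\ep]$. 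The identity that drives everything is that the Cauchy-smoothed difference $H_v:=H*p_v$ has a density, namely $H_v'(u)=\tfrac1\pi\,\IIm\big(s_F(u+iv)-s_G(u+iv)\big)$, because $\IIm s_F(u+iv)=\int\frac{v}{(u-y)^2+v^2}\,dF(y)$ and likewise for $G$.

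\emph{De-smoothing (the core step).} Under the reflection $x\mapsto-x$ all the quantities in the statement are preserved (it sends $F,G$ to $1-F(-\,\cdot\,),1-G(-\,\cdot\,)$), so it suffices to treat the case where there is $x_0$ with $F(x_0)-G(x_0)\ge\ga-\eta$ for arbitrarily small $\eta>0$. Since $F$ is nondecreasing, $F(x)\ge F(x_0)$ for every $x\ge x_0$; I would combine this monotonicity with the mass split of $p_v$ to show that, for a shift of size of order $v\ep$, the smoothed function $F*p_v$ at the shifted point is at least $F(x_0)$ times the mass $p_v$ puts to one side of $v\ep$, hence within a loss proportional to $(1-\rho)\,\ga$ of $F(x_0)$. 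For the function $G$, which is merely of bounded variation, there is no monotonicity, so the corresponding comparison of $G*p_v$ with $G(x_0)$ costs instead a term controlled by the oscillation of $G$ on scale $2v\ep$; integrating the resulting pointwise inequality over $x_0$ in an interval of length of order $v\ep$ turns this into exactly $v^{-1}\sup_x\int_{|u|\le 2v\ep}|G(x+u)-G(x)|\,du$. Subtracting the two estimates yields a lower bound of the shape
\begin{align*}
|H_v(\xi_0)|\;\ge\;(2\rho-1)\,\ga\;-\;\eta\;-\;v^{-1}\sup_x\int_{|u|\le 2v\ep}|G(x+u)-G(x)|\,du
\end{align*}
for a suitable point $\xi_0$ within $O(v\ep)$ of $x_0$.

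\emph{Reconstruction from the Stieltjes transforms and truncation.} Integrating $H_v'=\tfrac1\pi\IIm(s_F-s_G)(\,\cdot\,+iv)$ from $-\infty$ up to $\xi_0$ gives $\pi H_v(\xi_0)=\int_{-\infty}^{\xi_0}\IIm(s_F-s_G)(u+iv)\,du$, and symmetrically from $\xi_0$ to $+\infty$. Averaging the two representations, splitting each into the part over $[-A,A]$ and the tails $|u|>A$, and bounding the tails by integration by parts --- using $\IIm s_H(u+iv)=-\int H(y)\,\partial_y\big(\frac{v}{(u-y)^2+v^2}\big)\,dy$, where for $|u|>A$ and $|y|\le B$ the kernel is tiny while the contribution of $|y|>B$ is majorised by $\tfrac1{\pi v}\int_{|x|>B}|H|\,dx$ --- one obtains (up to the explicit constants) a bound of the form
\begin{align*}
\pi\,|H_v(\xi_0)|\;\le\;\int_{-A}^A|s_F-s_G|\,du\;+\;2\pi v^{-1}\!\int_{|x|>B}|F-G|\,dx\;+\;\tfrac{4B}{A-B}\,\ga .
\end{align*}
The last term equals $\pi\,\zeta(2\rho-1)\,\ga$ with $\zeta=\frac{4B}{\pi(A-B)(2\rho-1)}$, so it can be moved to the left-hand side.

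Combining the two displays, cancelling, letting $\eta\downarrow0$, and dividing by $\pi(1-\zeta)(2\rho-1)$ gives the asserted inequality. I expect the de-smoothing step to be the main obstacle: because the Cauchy kernel is heavy-tailed rather than compactly supported, the classical Esseen smoothing lemma does not apply, and one must use the monotonicity of $F$ together with the uniform bound $|F-G|\le\ga$ --- and, crucially, the choice of $\ep$ making $\rho>\tfrac12$ --- to guarantee that the smoothing error is a genuine contraction $(1-\rho)\ga$ of $\ga$ rather than an uncontrolled multiple of the total variation of $G$. Getting the bookkeeping of constants to land exactly on the factor $\frac1{\pi(1-\zeta)(2\rho-1)}$, and in particular arranging the shifts so that the $G$-oscillation appears with the integral (not the supremum) over $u$, is the delicate part of the argument.
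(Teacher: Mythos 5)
The paper does not prove this lemma at all: it is quoted verbatim as a known result from Bai (1993) (Theorem 2.2 there; see also Theorem B.14 and Corollary B.15 in Bai--Silverstein (2010)), so there is no in-paper proof to compare against. Your sketch correctly reproduces the architecture of Bai's original argument: the identity that $\frac{1}{\pi}\,\IIm\bigl(s_F(u+iv)-s_G(u+iv)\bigr)$ is the density of the Cauchy-smoothed difference, the de-smoothing step using the monotonicity of $F$ together with the condition $\rho>\frac12$ to retain a $(2\rho-1)\gamma$ lower bound while charging the non-monotone $G$ only its oscillation on scale $2v\epsilon$, the reconstruction of $H_v(\xi_0)$ by integrating $\IIm(s_F-s_G)$ with truncation at $A$ and $B$, and the absorption of the resulting $\frac{4B}{A-B}\gamma$ term into the left-hand side via $\zeta<1$. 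This is the right proof and the right decomposition; what remains is only the constant bookkeeping you explicitly defer, which is carried out in the cited references.
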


Here we should notice that we can use the same methods in \cite{GotzeT2003} to prove our theorem. However,  G\"{o}tze-Tikhomirov inequality (see Corollary 2.3 in \cite{GotzeT2003}) involves the supremum of $|s_n(z)-\E s_n(z)|$ over $\Im z$ in some interval. This makes the proof rather complicated. Therefore in this paper, we use Bai inequality instead of G\"{o}tze-Tikhomirov inequality which could make the presentation simpler.

\section{The proof  of  Theorem \ref{th1}.}
We will firstly introduce  a new technique which can handle the moment conditions efficiently. That is given in Lemma \ref{3.2}. Then, by using this lemma and dividing the expression of $\E|s_n-\E s_n|^2$, we prove our theorem step by step.

Before proving the theorem, we  introduce  some  notation. Denote $\bfI_n$ be  the identity matrix of size $n$ and $\bfa_i$ be the $i$th column of $\bfW_n$ with $x_{ii}$ removed. Define $\bfD(z)=n^{-1/2}\mathbf{W}_n-z\bfI_n$, $\bfD_i(z)=\bfD(z)-n^{-1}\bfa_i\bfa_i^*$ and $s_n=s_n(z)=s_{F^{\bfW_n}}(z)$.  Moreover write
\begin{gather*}
\b_i=\(n^{-1/2}x_{ii}-z-n^{-1}\bfa_i^*\bfD^{-1}_i\bfa_i\)^{-1},\quad
\ga_i=\bfa_i^*\bfD_i^{-1}\bfa_i-tr\bfD_i^{-1}\\
\ep_i=n^{-1/2}x_{ii}-n^{-1}\bfa_i^*\bfD_i^{-1}\bfa_i+ \E s_n(z),\quad
\hat\ga_i=\bfa_i^*\bfD_i^{-2}\bfa_i-tr\bfD_i^{-2}\\
\xi_i=tr\bfD^{-1}-tr\bfD_i^{-1},\quad
a_n=(z+\E s_n(z))^{-1},\quad b_n=(z+2\E s_n(z))^{-1}
\end{gather*}

Throughout this section, we denote $z=u+iv$, $u\in[-16,16]$ and $1\geq v\geq v_0= C_0n^{-1/2}$ with an appropriate constant $C_0$.
Let $s=s(z)=s_{F}(z)$, we know that  (see (3.2) in \cite{Bai1993} )
\begin{align*}
    s(z)=-\frac{1}{2}\(z-\sqrt{z^2-4}\) \mbox{ for all } z\in\mathbb C^+.
\end{align*}
Then we have
\begin{align}
    \int_{-16}^{16} \frac{1}{|z+2s(z)|}du\leq\int_{-16}^{16} \frac{1}{\sqrt{|z^2-4|}}du\le \int_{-16}^{16} \frac{1}{\sqrt{|u^2-4|}}du<10\label{z}.
\end{align}
In addition, by Lemma \ref{bai} and Theorem 8.2 in \cite{BaiS2010}, we have for some positive constant $C$,
\begin{align}
    \E\|F^{\bfW_n}-F\|\leq C \int_{-16}^{16}\E |s_n(z)-\E s_n(z)|du+O(n^{-1/2})\label{f}.
\end{align}
Therefore, the rest of the  proof is reduced to the lemma below.
\begin{lemma}\label{l1} Under the assumptions in Theorem \ref{th1}, for any $1>v\geq v_0=C_0n^{-1/2}$ with sufficiently large $C_0>0$, we have
  \begin{align*}
    \E \lj s_n(z)-\E s_n(z) \rj^2\leq\frac{C}{n|z+2s(z)|^2}.
  \end{align*}
\end{lemma}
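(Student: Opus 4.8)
The plan is to use the martingale decomposition of $s_n(z) - \E s_n(z)$ with respect to the filtration generated by the successive rows/columns of $\bfW_n$, and then bound the sum of conditional variances using the new moment-handling device of Lemma \ref{3.2} together with the standard resolvent identities recorded in the notation above. Concretely, writing $\E_k$ for conditional expectation given $\{x_{ij}: i,j \le k \text{ or } i,j > k\}$ (or the analogous filtration), one has the orthogonal decomposition $s_n - \E s_n = \sum_{k=1}^n (\E_{k-1} - \E_k)\tr\bfD^{-1}\cdot n^{-1} = \sum_{k=1}^n (\E_{k-1}-\E_k)(-\xi_k)\cdot n^{-1}$, using that $\tr\bfD^{-1} - \tr\bfD_k^{-1} = \xi_k$ and that $\tr\bfD_k^{-1}$ does not depend on the $k$-th column. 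By orthogonality of martingale differences, $\E|s_n - \E s_n|^2 = n^{-2}\sum_{k=1}^n \E\lj(\E_{k-1}-\E_k)\xi_k\rj^2 \le n^{-2}\sum_{k=1}^n \E|\xi_k|^2$, so everything reduces to showing $\E|\xi_k|^2 \le C/|z+2s(z)|^2$ uniformly in $k$ and in $v \ge v_0$.

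For the bound on $\E|\xi_k|^2$, I would use the rank-one perturbation formula $\xi_k = \tr\bfD^{-1} - \tr\bfD_k^{-1} = -\b_k\, n^{-1}\bfa_k^*\bfD_k^{-2}\bfa_k(1 + \text{correction})$; more precisely the exact identity $\tr\bfD^{-1} - \tr\bfD_k^{-1} = -\frac{n^{-1}\bfa_k^*\bfD_k^{-2}\bfa_k\,(1+\cdots)}{n^{-1/2}x_{kk} - z - n^{-1}\bfa_k^*\bfD_k^{-1}\bfa_k}$, which shows $|\xi_k| \le |\b_k|\cdot(1 + n^{-1}\bfa_k^*\bfD_k^{-1}(\bfD_k^{-1})^*\bfa_k)/v$ type estimates; in fact the clean bound $|\xi_k| \le v^{-1}$ holds deterministically (difference of traces of resolvents differing by a rank-one Hermitian perturbation), but that alone only gives $\E|\xi_k|^2 \le v^{-2}$, which is too weak near the edge. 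The point of the factor $|z+2s(z)|^{-2}$ is that $|z+2s(z)| = \sqrt{|z^2-4|}$ is comparable to the "distance to the bulk", and one must extract it from $\b_k \approx -(z + \E s_n)^{-1}$, i.e. from the fact that $z + 2\E s_n(z)$ is bounded away from $0$ in a way controlled by $\sqrt{|z^2-4|}$. So the real work is: (i) show $\E s_n(z)$ is close to $s(z)$ with an error small enough (here the a priori rate $\De_n = O(n^{-1/2})$ of \cite{Bai1993,BaiS2010}, already invoked in \eqref{f}, or a bootstrap argument, is used); (ii) deduce $|a_n|, |b_n| \le C|z+2s(z)|^{-1}$ for $v \ge v_0$; (iii) replace $\b_k$ by $b_n$ inside $\xi_k$ at the cost of error terms $\ep_k$, $\ga_k$, $\hat\ga_k$ whose second moments are $O(n^{-1})$ by Lemma \ref{3.2} and the sixth-moment assumption.

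The key step where the sixth moment enters, and which I expect to be the main obstacle, is the estimation of the quadratic-form fluctuations $\E|\ga_k|^2 = \E|\bfa_k^*\bfD_k^{-1}\bfa_k - \tr\bfD_k^{-1}|^2$ and $\E|\hat\ga_k|^2$ with the correct dependence on $|z+2s(z)|$ rather than just on $v$. The standard estimate gives $\E|\ga_k|^2 \le C n^{-1}\E\tr(\bfD_k^{-1}(\bfD_k^{-1})^*) \le C v^{-2}$ (using $\E x_{ij}^4 < \infty$), but to get the $|z+2s(z)|^{-2}$ scaling and to keep the constants under control with only a sixth moment one needs a more careful truncation/centering of the entries $x_{ij}$ at level $n^{1/2}$ (or $\eta_n\sqrt n$), a bound on the higher conditional moments via Burkholder's inequality, and crucially the new Lemma \ref{3.2} that trades an excess power of a resolvent norm against a moment of the entries — this is presumably where "handling the moment conditions efficiently" pays off and lets one avoid the twelfth moment used in \cite{GotzeT2003}. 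Once $\E|\ep_k|^2, \E|\ga_k|^2, \E|\hat\ga_k|^2$ and the stability of $\E s_n$ are all in hand with the right edge-scaling, assembling them through the identity for $\xi_k$ and summing the martingale differences yields the claimed bound $\E|s_n - \E s_n|^2 \le C/(n|z+2s(z)|^2)$.
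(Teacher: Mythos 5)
Your proposal takes a genuinely different route from the paper — a direct martingale decomposition of $s_n - \E s_n$ rather than the paper's self-consistent quadratic inequality — but the central reduction it rests on is not merely unproved, it is false.

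You pass from $\E|s_n-\E s_n|^2 = n^{-2}\sum_k \E|(\E_{k-1}-\E_k)\xi_k|^2$ to $n^{-2}\sum_k\E|\xi_k|^2$ and then claim everything reduces to $\E|\xi_k|^2\le C|z+2s(z)|^{-2}$. That estimate fails in the bulk at the scale $v\asymp n^{-1/2}$. Indeed $\xi_k = \tr\bfD^{-1}-\tr\bfD_k^{-1}$ contains the term $n^{-1}\hat\ga_k = n^{-1}\(\bfa_k^*\bfD_k^{-2}\bfa_k - \tr\bfD_k^{-2}\)$, and by Lemma~\ref{4.1} its conditional variance is of size $n^{-2}\tr\bfD_k^{-2}(\bfD_k^{-2})^*\asymp n^{-2}\cdot n v^{-3} = n^{-1}v^{-3}$; at $v = C_0 n^{-1/2}$ in the bulk this is of order $n^{1/2}$, while your target bound is $|z+2s(z)|^{-2}\asymp 1$. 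Thus $\E|\xi_k|^2$ is not $O(|z+2s(z)|^{-2})$, and the gap cannot be closed by extracting the factor $|z+2s(z)|^{-1}$ from $\b_k\approx a_n$: the relevant bound on $a_n$ is only $|a_n|\le 1$, and $|z+2s(z)|^{-1}$ enters through $b_n=(z+2\E s_n)^{-1}$, not through $\b_k$. Your further remark that $\ep_k,\ga_k,\hat\ga_k$ have second moments $O(n^{-1})$ is also not correct (e.g.\ $\E|\ga_k|^2 \asymp n v^{-1}$). The point that you discard — and that the paper exploits — is the cancellation inside the martingale difference $(\E_{k-1}-\E_k)\xi_k$ as well as, more importantly, the self-referential structure of the resolvent equation \eqref{sn1}: the paper writes $\E|s_n-\E s_n|^2 = a_n(S_1+\cdots+S_4)$ and isolates, inside $S_4 = -\E|s_n-\E s_n|^2 s_n$, a copy of $\E|s_n-\E s_n|^2$ multiplied by a coefficient whose size is $|1-a_n^2|\ge |a_n(z+2s(z))|$. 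Solving the resulting quadratic inequality with the Götze--Tikhomirov estimates $|1-a_n^2|\ge|a_n(z+2s(z))|$ and $|b_n|\le 2|z+2s(z)|^{-1}$ is what produces the $|z+2s(z)|^{-2}$ in the final bound. A pure martingale argument with termwise moment bounds does not see this algebraic mechanism and, done honestly, recovers only $\E|s_n-\E s_n|^2 \le C n^{-2}v^{-3}$ (Lemma~\ref{4.3}), which is strictly weaker than the claimed bound away from the spectral edge.
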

\subsection{Known results and a  preliminary lemma}
Following the same truncation, centralization and rescaling steps in \cite{BaiS2010}, in this section  we may assume the random variables satisfy the  conditions as follows
  \begin{align*}
    |x_{ij}|\leq n^{1/4},~~\E x_{ij}=0,~~\E|x_{ij}|^2=1 \mbox{ for all  } i,j.
\end{align*}
Bai in \cite{Bai1993} derived
\begin{align}
        s_n(z)&=\frac{1}{n} tr\bfD^{-1}=\frac{1}{n}\sum_{i=1}^n\b_i=-a_n+\frac{a_n}{n}\sum_{i=1}^n\b_i\ep_i\lb{sn}.
\end{align}
For each $i$ we have
\begin{align*}
    |\Im \b_i^{-1}|=|\Im\(z+n^{-1}\bfa_i^*\bfD^{-1}_i\bfa_i\)|\geq v,
\end{align*}
which implies
\begin{align}
|\b_i|\leq v^{-1}.\lb{beta}
\end{align}
From the definition of $\ep_i$ it follows that
\begin{align}
\ep_i=n^{-1/2}x_{ii}-n^{-1}\ga_i+n^{-1}\xi_i-(s_n-\E s_n)\lb{ep},
\end{align}
and
\begin{align}
    s_n=-a_n+\frac{a_n}{n^{3/2}}\sum_{i=1}^n\b_ix_{ii}+\frac{a_n}{n^{2}}\sum_{i=1}^n\b_i\ga_i+\frac{a_n}{n^{2}}\sum_{i=1}^n\b_i\xi_{i}-a_n(s_n-\E s_n)s_n\lb{sn1}.
\end{align}
Then, we  have  the  the following lemma.
\begin{lemma}\lb{3.2}
   Under the assumption in Theorem \ref{th1}, we have
 \begin{align}
    \P\(|\b_i|>2\)\leq\frac{C}{n^{2}v^{2}}\lb{pbeta}.
\end{align}
\end{lemma}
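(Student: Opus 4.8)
The plan is to show that, with high probability, the random quantity $n^{-1}\bfa_i^*\bfD_i^{-1}\bfa_i$ is close to its ``expected'' value $-\E s_n(z)$, so that $\b_i^{-1}$ is close to $-(z+\E s_n(z))=-a_n^{-1}$, which has modulus bounded below (by the estimate \eqref{z}-type control on $|z+2s(z)|$ together with the closeness of $\E s_n$ to $s$). Concretely, I would write
\[
\b_i^{-1}=-a_n^{-1}+n^{-1/2}x_{ii}-n^{-1}\ga_i+\xi_i,
\]
using $\ga_i=\bfa_i^*\bfD_i^{-1}\bfa_i-\rtr\bfD_i^{-1}$ and the identity $n^{-1}\rtr\bfD_i^{-1}=s_n+O(1/(nv))$ via the rank-one perturbation bound $|\xi_i|\le v^{-1}/n$ (a standard consequence of the resolvent identity and $\|\bfD^{-1}\|,\|\bfD_i^{-1}\|\le v^{-1}$). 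If each of the three error terms $n^{-1/2}x_{ii}$, $n^{-1}\ga_i$, $\xi_i$, as well as $s_n-\E s_n$, is smaller than, say, $\tfrac14|a_n^{-1}|$ in absolute value, then $|\b_i^{-1}|\ge \tfrac14|a_n^{-1}|\ge c>0$, hence $|\b_i|\le 2$ provided $C_0$ is chosen large enough that $c\ge 1/2$; so the complement event $\{|\b_i|>2\}$ is contained in the union of the four events where one of these terms exceeds its threshold.

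The four pieces are then estimated by Chebyshev/Markov. For $n^{-1/2}x_{ii}$: under the truncation $|x_{ii}|\le n^{1/4}$ and the third-moment bound, $\P(n^{-1/2}|x_{ii}|>\text{const})$ decays polynomially; in fact $\E|n^{-1/2}x_{ii}|^2\le C/n$, which already gives $O(1/n)$, more than enough. For the martingale-type term $\ga_i=\bfa_i^*\bfD_i^{-1}\bfa_i-\rtr\bfD_i^{-1}$: since $\bfa_i$ is independent of $\bfD_i$, the standard quadratic-form concentration inequality (Lemma 8.x / the Burkholder-type bound in \cite{BaiS2010}) gives, for the truncated entries with $\E|x_{jk}|^6<\infty$,
\[
\E|\ga_i|^2\le C\,\E\bigl|\rtr(\bfD_i^{-1}(\bfD_i^{-1})^*)\bigr|\le C\,n v^{-2},
\]
so $\E|n^{-1}\ga_i|^2\le C/(nv^2)$ and $\P(|n^{-1}\ga_i|>\text{const})\le C/(nv^2)$. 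For $\xi_i$: the deterministic bound $|\xi_i|\le 1/(nv)$ makes this event empty once $n v\ge$ const, i.e. for $v\ge v_0=C_0 n^{-1/2}$ with $C_0$ large. The remaining term $s_n-\E s_n$ is handled by the McDiarmid/martingale-difference bound $\E|s_n-\E s_n|^2\le C/(n v^4)\cdot v^2 = C/(nv^2)$ — or more crudely by noting $\E|s_n-\E s_n|^2 \le C/(n^2v^4)\cdot n$; in any case one gets a bound of order $C/(n^2 v^2)$ after using $|s_n-\E s_n|\le 2/v$ to square-root appropriately, or simply $\P(|s_n-\E s_n|>\text{const})\le C/(nv^2)$, which suffices.

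Collecting the four bounds, the dominant ones are $O(1/(nv^2))$, which is weaker than the claimed $C/(n^2v^2)$; to get the stronger rate I would instead bound the probabilities by higher moments. For $\ga_i$ the sixth-moment hypothesis lets one push to $\E|\ga_i|^{q}$ for $q$ up to essentially $3$, and more importantly one exploits that on the bulk event $|\b_i|\le v^{-1}$ always holds, so $\P(|\b_i|>2)=\P(|\b_i^{-1}|<1/2)$ can be estimated using $\E|\b_i^{-1}+a_n^{-1}|^2$ and a second application of Chebyshev after first conditioning; the extra factor $1/(nv^2)$ comes from combining $\E|n^{-1}\ga_i|^2\le C/(nv^2)$ with the observation that the event forces $|n^{-1/2}x_{ii}-n^{-1}\ga_i+\xi_i-(s_n-\E s_n)|$ to be of order $1$, whose probability is itself $O(1/(nv^2))$, and squaring the independent $x_{ii}$ contribution gives the second factor. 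The main obstacle is precisely this bookkeeping: making sure the polynomial decay is genuinely $n^{-2}v^{-2}$ and not merely $n^{-1}v^{-2}$, which requires carefully using the independence of $x_{ii}$ from the rest and the sixth-moment bound (rather than just the fourth) in the quadratic-form estimate for $\ga_i$; everything else is routine resolvent algebra.
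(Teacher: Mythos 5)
Your decomposition of $\b_i^{-1}=-a_n^{-1}+\ep_i$ (with $\ep_i=n^{-1/2}x_{ii}-n^{-1}\ga_i+n^{-1}\xi_i-(s_n-\E s_n)$) and the observation that $\{|\b_i|>2\}$ forces $|\ep_i|$ to be of constant order is exactly the paper's starting point; the paper phrases it via $\b_i=-a_n(1-a_n\ep_i)^{-1}$, so that $\{|\b_i|>2\}\subset\{|a_n\ep_i|>1/2\}$. Up to that point your proposal matches. You also correctly diagnose that second-moment Chebyshev only yields $O(1/(nv^2))$.

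The gap is in how you then try to upgrade to $O(1/(n^2v^2))$. The final paragraph is not a valid argument: ``combining $\E|n^{-1}\ga_i|^2\le C/(nv^2)$ with the observation that the event forces $|\ep_i|$ to be of order $1$, whose probability is itself $O(1/(nv^2))$, and squaring the independent $x_{ii}$ contribution'' amounts to multiplying two probability bounds for events that are not independent, and ``a second application of Chebyshev after conditioning'' is not spelled out in a way that could close the factor of $n$. You also underestimate what the sixth-moment hypothesis gives: you stop at $\E|\ga_i|^q$ with $q\approx 3$, but after the truncation $|x_{jk}|\le n^{1/4}$ one has $\nu_8\le n^{1/2}\nu_6\le Cn^{1/2}$, so Lemma \ref{4.1} at $p=4$ is available and the extra $n^{1/2}$ is harmless. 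The paper's proof is the clean version of what you are reaching for: it shows $\E|\ga_i|^4\le Cn^2/v^2$ (Lemma \ref{4.1} with $p=4$, the $\nu_8$ controlled by truncation), hence $\E|\ep_i|^4\le C/(n^2v^2)$ after also bounding the $x_{ii}$, $\xi_i$, and $s_n-\E s_n$ terms, and then simply applies Markov at the fourth power,
\[
\P(|\b_i|>2)\le \P\bigl(|a_n\ep_i|>\tfrac12\bigr)\le 2^4\,\E|\ep_i|^4\le \frac{C}{n^2v^2},
\]
with no union bound or conditioning. So: right skeleton and right diagnosis of the shortfall, but the mechanism you propose to recover the extra $1/n$ does not work; the missing ingredient is the fourth-moment bound on $\ep_i$ (equivalently on $\ga_i$), made available by truncation.
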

\begin{proof}
   From integration by parts and  Theorem 1.1 in \cite{GotzeT2003}, we have for $1>v>v_0$,
\begin{align*}
    &|\E s_n(z)-s(z)| =\left|\int_{-\infty}^{\infty}\frac{d(\E F^{\mathbf{W}_n}(x)-F^{}(x))}{x-z}\right|\\
    &= \left|\int_{-\infty}^{\infty}\frac{\E F^{\mathbf{W}_n}(x)-F^{}(x)}{(x-z)^2}dx\right|\leq C,
\end{align*}
which together with the fact that $ |s(z)|\leq 1$ ( see (3.3) in \cite{Bai1993}) implies
\begin{align*}
    \E|s_n(z)|\leq C.
\end{align*}
Then, from Lemma \ref{4.1}, Lemma \ref{4.2} and Lemma \ref{4.3}, we can check that
\begin{align}
    \E|\ga_i|^4&\leq C\E\(\(tr\bfD_i^{-1}(\bfD_i^{-1})^*\)^2+n^{1/2}tr\(\bfD_i^{-1}(\bfD_i^{-1})^*\)^2\)\no\\
    &\leq C\(v^{-2}\E|tr\bfD_i^{-1}|^2+n^{1/2}v^{-3}\E|tr\bfD_i^{-1}|\) \no\\
    &\leq \frac{C n^2}{v^2} \lb{ga4}.
\end{align}
Thus, from (\ref{ep}), Lemma \ref{4.2} and Lemma \ref{4.3} we have  for $v>v_0$,
\begin{align}
    \E|\ep_i|^4\leq \frac{C}{n^2v^2}\lb{epi}.
\end{align}

In addition, from (8.1.19) in \cite{BaiS2010}, we know that
 \begin{align}
 |a_n|<1 \mbox{ for all }z\in\mathbb{C}^+ \lb{an}.
\end{align}
Therefore we obtain
\begin{align*}
    \P\(|\b_i|>2\)\leq \P\(|a_n\ep_i|>\frac{1}{2}\)\leq 2^{4}\E|\ep_i|^{4}\leq \frac{C}{n^2v^2}.
\end{align*}
\end{proof}
\subsection{The proof of Lemma \ref{l1}}
Notice that in this subsection, we will use the equality $\b_i=-a_n+a_n\b_i\ep_i$ frequently.
From \eqref{sn1}, we have
\begin{align*}
&\E\lj s_n-\E s_n \rj^2=\E(\ol {s_n-\E s_n})( s_n-\E s_n )\\
=&\E(\ol {s_n-s_n}) s_n=a_n(S_1+S_2+S_3+S_4)
\end{align*}
where
\begin{align*}
    S_1&=\frac{1}{n^{3/2}}\sum_{i=1}^n\E(\ol {s_n-\E s_n})x_{ii}\b_i\\
    S_2&=-\frac{1}{n^{2}}\sum_{i=1}^n\E(\ol {s_n-\E s_n})\ga_i\b_i\\    S_3&=\frac{1}{n^{2}}\sum_{i=1}^n\E(\ol {s_n-\E s_n})\xi_{i}\b_i\\
     S_4&=-\E|s_n-\E s_n|^2s_n.
\end{align*}

We  first consider $S_1$. From \eqref{sn},  we have
\begin{align*}
    S_1&=\frac{1}{n^{3/2}}\sum_{i=1}^n\E(\ol {s_n-\E s_n})x_{ii}\b_i\\
    &=-\frac{a_n}{n^{3/2}}\sum_{i=1}^n\E(\ol {s_n-\E s_n})x_{ii}+\frac{a_n}{n^{3/2}}\sum_{i=1}^n\E(\ol {s_n-\E s_n})x_{ii}\b_i\ep_i\\
    &=S_{11}+S_{12}.
\end{align*}
By \eqref{an} and Lemma \ref{4.2} we  have
\begin{align}
    |S_{11}|=    \lj\frac{a_n}{n^{5/2}}\sum_{i=1}^n\E\xi_ix_{ii}\rj\leq\lj\frac{a_n}{n^{5/2}v}\sum_{i=1}^n\E|x_{ii}|\rj\leq \frac{1}{n^{3/2}v}\lb{3.10}.
\end{align}
Applying \eqref{ep}, we obtain
\begin{align*}
    S_{12}&=\frac{a_n}{n^{3/2}}\sum_{i=1}^n\E(\ol {s_n-\E s_n})x_{ii}\b_i\ep_i\\
    &=S_{121}+S_{122}+S_{123}+S_{124},
\end{align*}
where
\begin{align*}
    S_{121}&=\frac{a_n}{n^{2}}\sum_{i=1}^n\E(\ol {s_n-\E s_n})x_{ii}^2\b_i\\
    S_{122}&=-\frac{a_n}{n^{5/2}}\sum_{i=1}^n\E(\ol {s_n-\E s_n})x_{ii}\b_i\ga_i\\
    S_{123}&=\frac{a_n}{n^{5/2}}\sum_{i=1}^n\E(\ol {s_n-\E s_n})x_{ii}\b_i\xi_i\\
    S_{124}&=-\frac{a_n}{n^{3/2}}\sum_{i=1}^n\E| s_n-\E s_n|^2x_{ii}\b_i.
\end{align*}
Using Lemma \ref{3.2},  Lemma \ref{4.3},  \eqref{beta} and H\"{o}lder's inequality, we  get
\begin{align}
    \lj S_{121}\rj=&\lj\frac{a_n}{n^{2}}\sum_{i=1}^n\E(\ol {s_n-\E s_n})x_{ii}^2\b_i\rj\no\\
    \leq& \frac{C}{n^{2}}\sum_{i=1}^n\(\E\lj(\ol {s_n-\E s_n})x_{ii}^2\rj+v^{-1}\E\lj(\ol {s_n-\E s_n})x_{ii}^2I(|\beta_i|>2)\rj\)\no\\
    \leq&\frac{C}{n^{2}}\sum_{i=1}^n\(\E\lj(\ol {s_n-\E s_n})x_{ii}^2\rj\)\no\\
   \le&\frac{C}{n^{2}}\sum_{i=1}^n\(\E| s_n-\E s_n|^3\)^{1/3}\(\E|x_{ii}^3|\)^{2/3}= O(\frac{1}{n^2v^{3/2}}).\lb{3.11}
\end{align}
Similarly, since $x_{ii}$ and  $\ga_i$ are independent, then by  Lemma \ref{4.1} and  H\"{o}lder's inequality,  we  have
\begin{align}
    |S_{122}|=&\lj\frac{a_n}{n^{5/2}}\sum_{i=1}^n\E(\ol {s_n-\E s_n})x_{ii}\b_i\ga_i\rj\no\\
    \leq&\frac{C}{n^{5/2}}\sum_{i=1}^n\(\E\lj (s_n-\E s_n)x_{ii}\ga_i\rj\)\no\\
    \leq&\frac{C}{n^{5/2}}\sum_{i=1}^n\( \E|s_n-\E s_n|^2\E|\ga_i|^2\)^{1/2}\no\\
    =&O(\frac{1}{n^2v^2}).\lb{3.12}
\end{align}
Using Lemma \ref{3.2} and Lemma \ref{4.2} again,
\begin{align}
    |S_{123}|=&\lj\frac{a_n}{n^{5/2}}\sum_{i=1}^n\E(\ol {s_n-\E s_n})x_{ii}\b_i\xi_i\rj\no\\
    \leq&\frac{C}{n^{5/2}v}\sum_{i=1}^n\(\E| (s_n-\E s_n)x_{ii}|\)\leq\frac{C}{n^{5/2}v^{5/2}},\lb{3.13}
\end{align}
and
\begin{align}
    |S_{124}|=&\lj\frac{a_n}{n^{3/2}}\sum_{i=1}^n\E| s_n-\E s_n|^2x_{ii}\b_i\rj\no\\
\leq&\frac{C}{n^{3/2}}\sum_{i=1}^n\E |s_n-\E s_n|^2|x_{ii}|\leq\frac{C}{n^{5/2}v^{3}}.\lb{3.14}
\end{align}
Therefore combining inequalities \eqref{3.10}-\eqref{3.14}  we obtain
\begin{align}
    |S_{1}|=O\( \frac{1}{n}\)\lb{s1}.
\end{align}

Furthermore, we have the following expression for $S_2$,
\begin{align*}
    S_2&=-\frac{1}{n^{2}}\sum_{i=1}^n\E(\ol {s_n-\E s_n})\ga_i\b_i\\
    &=\frac{a_n}{n^{2}}\sum_{i=1}^n\E(\ol {s_n-\E s_n})\ga_i-\frac{a_n}{n^{2}}\sum_{i=1}^n\E(\ol {s_n-\E s_n})\ga_i\ep_i\b_i\\
    &=S_{21}+S_{22}+S_{23}+S_{24}+S_{25},
\end{align*}
where
\begin{align*}
    S_{21}&=\frac{a_n}{n^{2}}\sum_{i=1}^n\E(\ol {s_n-n^{-1}tr\bfD_i})\ga_i\\
    S_{22}&=-\frac{a_n}{n^{5/2}}\sum_{i=1}^n\E(\ol {s_n-\E s_n})x_{ii}\ga_i\b_i\\
    S_{23}&=\frac{a_n}{n^{3}}\sum_{i=1}^n\E(\ol {s_n-\E s_n})\b_i\ga_i^2\\
    S_{24}&=-\frac{a_n}{n^{3}}\sum_{i=1}^n\E(\ol {s_n-\E s_n})\ga_i\b_i\xi_i\\
    S_{25}&=\frac{a_n}{n^{2}}\sum_{i=1}^n\E| s_n-\E s_n|^2\ga_i\b_i.
\end{align*}
Here we use the method which we used to handle the bound of $S_1$. Firstly, we express $S_{21}$ as follows
  \begin{align*}
    S_{21}&=\frac{a_n}{n^{3}}\sum_{i=1}^n\E(\ol {(1+n^{-1}\bfa_i^*\bfD_i^{-2}\bfa_i)\b_i})\ga_i\\
    &=S_{211}+S_{212},
\end{align*}
where
\begin{align*}
    S_{211}&=-\frac{|a_n|^2}{n^{4}}\sum_{i=1}^n\E(\ol {\hat{\ga}_i})\ga_i\\
    S_{212}&=\frac{|a_n|^2}{n^{3}}\sum_{i=1}^n\E(\ol {(1+n^{-1}\bfa_i^*\bfD_i^{-2}\bfa_i)\b_i\ep_i})\ga_i.
\end{align*}
From Lemma \ref{4.1} and\hi   we get
\begin{align*}
    |S_{211}|\leq \frac{C}{n^{4}}\sum_{i=1}^n\(\E|\hat{\ga}_i|^2\E|\ga_i|^2\)^{1/2}\leq\frac{C}{n^2v^2}.
\end{align*}
Applying Lemma \ref{4.2},\hi and \eqref{epi}, we  obtain \begin{align*}
    S_{212}&=\frac{|a_n|^2}{n^{2}}\lj\sum_{i=1}^n\E(\ol {s_n-n^{-1}tr\bfD_i^{-1}\ep_i})\ga_i\rj\leq \frac{C}{n^{2}v}\sum_{i=1}^n\(\E|\ep_i|^2)\E|\ga_i|^2\)^{1/2}\leq\frac{C}{n^2v^2}.
\end{align*}
Note that $|S_{22}|=|S_{122}|=O(n^{-2}v^{-2}) $. And using Lemma \ref{3.2}, \eqref{ga4} and H\"{o}lder's inequality,  we have
\begin{align*}
    |S_{23}|\leq& \frac{C}{n^{3}}\sum_{i=1}^n\(\E\lj(\ol {s_n-\E s_n})\ga_i^2\rj\)   \leq\frac{C}{nv}\(\E\lj {s_n-\E s_n}\rj^{2}\)^{1/2},
\end{align*}
and
\begin{align*}
       |S_{24}|\leq&\frac{C }{n^{3}v}\sum_{i=1}^n\(\E|s_n-\E s_n|^2)\E|\ga_i|^2\)^{1/2}\leq \frac{C}{n^{5/2}v^{5/2}}.\\
\end{align*}
Now consider  $S_{25}$, using Lemma \ref{3.2}, Lemma \ref{4.3},\hi and \eqref{epi}, we write \begin{align*}
    |S_{25}|=&\frac{|a_n|}{n^{4}}\lj\sum_{i=1}^n\E| tr\bfD_i^{-1}-\E tr\bfD_i^{-1}|^2\ga_i\b_i\rj+O(\frac{1}{n^{5/2}v^{5/2}})\\
    =&\frac{|a_n|^2}{n^{4}}\lj\sum_{i=1}^n\E| tr\bfD_i^{-1}-\E tr\bfD_i^{-1}|^2\ga_i\ep_i\b_i\rj+O(\frac{1}{n^{5/2}v^{5/2}})\\
     \leq&\frac{C}{n^{4}}\sum_{i=1}^n\(\E| tr\bfD_i^{-1}-\E tr\bfD_i^{-1}|^4\E|\ep_i|^4\(\E|\ga_i|^2\)^2\)^{1/4}+O(\frac{1}{n^{5/2}v^{5/2}})\\
     =&O\(\frac{1}{n^2v^2}\).
\end{align*}
Then, we conclude that
\begin{align}
    |S_2|\leq\frac{C}{nv}\(\E\lj {s_n-\E s_n}\rj^{2}\)^{1/2} +\frac{C}{n^2v^2}\lb{s2}.
\end{align}

From Lemma \ref{3.2}, Lemma \ref{4.2} and \hi, it is easy to check that
\begin{align}
    |S_3|\leq \frac{C}{nv}\(\E|s_{n}-\E s_{n}|^2\)^{1/2}\lb{s3}.
\end{align}
Therefore, it remians to get the bound of $S_4$. Now we recall the equality \eqref{sn1},
\begin{align*}
 S_4&=-\E|s_n-\E s_n|^2s_n\no\\
     &=-\E s_n\E|s_n-\E s_n|^2-\E|s_n-\E s_n|^2(s_n-\E s_n)\\
     &=-\E s_n\E|s_n-\E s_n|^2+(a_n+\E s_n)\E|s_n-\E s_n|^2-\frac{a_n}{n}\sum_{i=1}^n\E|s_n-\E s_n|^2\b_i\ep_i\no\\
     &=-\E s_n\E|s_n-\E s_n|^2+(a_n+\E s_n)\E|s_n-\E s_n|^2-a_n(S_{41}+S_{42}+S_{43}+S_{44}),\no
\end{align*}
where
\begin{align*}
    S_{41}=&\frac{1}{n^{3/2}}\sum_{i=1}^n\E|s_n-\E s_n|^2x_{ii}\b_i\no\\
    S_{42}=&-\frac{1}{n^{2}}\sum_{i=1}^n\E|s_n-\E s_n|^2\ga_i\b_i\no\\    S_{43}=&\frac{1}{n^{2}}\sum_{i=1}^n\E|s_n-\E s_n|^2\xi_{i}\b_i\no\\
     S_{44}=&-\E|s_n-\E s_n|^2(s_n-\E s_n)s_n\no\\
     =&-\E s_n\E|s_n-\E s_n|^2(s_n-\E s_n)\\
     &-\E|s_n-\E s_n|^2(s_n-\E s_n)^2.
\end{align*}
Comparing $S_4$ with $S_{44}$,  we obtain that
\begin{align*}
   &(1+a_n\E s_n)\E|s_n-\E s_n|^2(s_n-\E s_n)\\
   =&-(a_n+\E s_n)\E|s_n-\E s_n|^2\\
   &+a_n(S_{41}+S_{42}+S_{43}-\E|s_n-\E s_n|^2(s_n-\E s_n)^2),
\end{align*}
which implies that
\begin{align*}
    &-\E|s_n-\E s_n|^2(s_n-\E s_n)\\
     =&b_na^{-1}_n(a_n+\E s_n)\E|s_n-\E s_n|^2\\
     &-b_n(S_{41}+S_{42}+S_{43}-\E|s_n-\E s_n|^2(s_n-\E s_n)^2)
\end{align*}
Thus denote $\de_n=n^{-1}\sum_{i=1}^n\E\b_i\ep_i$, we conclude that
\begin{align*}
    S_4=&(-\E s_n+b_na^{-1}_n(a_n+\E s_n))\E|s_n-\E s_n|^2\\
    &-b_n(S_{41}+S_{42}+S_{43}-\E|s_n-\E s_n|^2(s_n-\E s_n)^2)\\
    =&(a_n-\de_nb_n\E s_n)\E|s_n-\E s_n|^2\\
    &-b_n(S_{41}+S_{42}+S_{43}-\E|s_n-\E s_n|^2(s_n-\E s_n)^2)\\
        =&(a_n+a_n\de_nb_n)\E|s_n-\E s_n|^2\\
        &-b_n(\de_n^2\E|s_n-\E s_n|^2+S_{41}+S_{42}+S_{43}-\E|s_n-\E s_n|^2(s_n-\E s_n)^2).
\end{align*}
It is obvious that $S_{41}$ and $S_{124}$ have the  same  bound, $S_{42}$ and  $S_{25}$  have the  same  bound. Using Lemma \ref{4.2} and Lemma \ref{4.3} we get
 \begin{align*}
    |\E|s_n-\E s_n|^2(s_n-\E s_n)^2|\leq \E|s_n-\E s_n|^4\leq \frac{C}{n^4v^6},
\end{align*}
and
\begin{align*}
    |S_{43}|\leq \frac{1}{nv}\(\E|s_n - \E s_n|^4\)^{1/2}\leq \frac{C}{n^3v^4}.
\end{align*}
Furthermore, from the definition of $\de_i$ and \eqref{epi}, we have
\begin{align*}
    |\de_n|=\lj n^{-1}\sum_{i=1}^n\(\E n^{-1}\bfD_i^{-1}-\E s_n+\E\b_i\ep_i^2\)\rj\leq\frac{C}{nv}.
\end{align*}
Therefore, we obtain
\begin{align*}
    S_4=a_n\E|s_n-\E s_n|^2+O\(\frac{|b_n|}{n^2v^2}\),
\end{align*}
which  combined with \eqref{s1},\eqref{s2} and \eqref{s3} implies
\begin{align*}
    |1-a_n^2|\E|s_n-\E s_n|^2\leq \frac{C_1|a_nb_n|}{n}+\frac{C_2|a_n|}{\sqrt{n}}\(\E|s_n-\E s_n|^2\)^{1/2}.
\end{align*}
Then, from (6.91) and (6.95) in \cite{GotzeT2003} which are under existing fourth moment assumption,  for $1>v>v_0$,
\begin{align*}
   |1-a_n^2|\geq|a_n(z+2s(z))|\mbox{ and } |b_n|\leq2|z+2s(z)|^{-1},
\end{align*}
we obtain the following inequality
\begin{align*}
    \E|s_n-\E s_n|^2\leq \frac{C_1}{n|z+2s(z)|^2}+\frac{C_2}{\sqrt{n}|z+2s(z)|}\(\E|s_n-\E s_n|^2\)^{1/2}.
\end{align*}
Solving this inequality, we obtain
\begin{align*}
    \E|s_n-\E s_n|^2\leq \frac{C}{n|z+2s(z)|^2},
\end{align*}
which complete the proof of the Lemma.

\section{Basic lemmas}
In this section we list some results which are needed in the proof.
\begin{lemma}\label{4.1}(Lemma B.26 of \cite{BaiS2010}) Let $\bfA$ be an $n \times n$ nonrandom matrix and $\bfX = (x_1,\dots, x_n)^*$ be a random vector of independent entries. Assume that $\E x_i = 0$, $\E |x_i|^2 = 1$, and $E|x_j |^l\leq\nu_l$. Then, for any $p \geq 1$,
  \begin{align*}
    \E|\bfX^*\bfA \bfX-tr\bfA|^p\leq C_p\(\(\nu_4tr(\bfA\bfA^*)\)^{p/2}+\nu_{2p}tr(\bfA\bfA^*)^{p/2}\),
  \end{align*}
  where $C_p$ is a constant depending on $p$ only.
\end{lemma}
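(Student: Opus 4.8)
The plan is to prove the bound by (strong) induction on $p\ge 1$, using a martingale--difference decomposition together with Burkholder's inequality. We may assume $\nu_\ell\ge 1$ for every $\ell$ (replace $\nu_\ell$ by $\max(\nu_\ell,1)$, which only weakens the hypothesis), so that by Lyapunov's inequality $\nu_r\le\nu_s$ and $\nu_r^2\le\nu_{2r}$ whenever $r\le s$. For $1\le p\le 2$ (the base case) I would expand $\bfX^*\bfA\bfX-\mathrm{tr}\,\bfA=\sum_i a_{ii}(|x_i|^2-1)+\sum_{i\ne j}\bar x_i a_{ij}x_j$, compute the second moment directly by independence and $|\E x_i^2|\le\E|x_i|^2=1$ to get $\E|\bfX^*\bfA\bfX-\mathrm{tr}\,\bfA|^2\le C\nu_4\,\mathrm{tr}(\bfA\bfA^*)$, and then deduce the case $1\le p<2$ from Jensen's inequality. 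So fix $p>2$ and assume the inequality holds for every matrix and every exponent in $[1,p/2]$.

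\textbf{Martingale decomposition and the jump term.} With $\cF_k=\sigma(x_1,\dots,x_k)$ and $\E_{k-1}=\E(\cdot\mid\cF_{k-1})$, a short computation gives $(\E_k-\E_{k-1})(\bfX^*\bfA\bfX)=Y_k$, where $Y_k=a_{kk}(|x_k|^2-1)+x_k u_k+\bar x_k v_k$ with $u_k=\sum_{i<k}\bar x_i a_{ik}$ and $v_k=\sum_{j<k}a_{kj}x_j$ being $\cF_{k-1}$--measurable; hence $\bfX^*\bfA\bfX-\mathrm{tr}\,\bfA=\sum_{k=1}^n Y_k$ is a martingale--difference sum, and Burkholder's inequality (in the form $\E|\sum_k Y_k|^p\le C_p\E(\sum_k\E_{k-1}|Y_k|^2)^{p/2}+C_p\sum_k\E|Y_k|^p$, valid for $p\ge 2$) reduces matters to the two terms on the right. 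For the jump term, conditioning on $\cF_{k-1}$ and using independence of $x_k$ gives $\E|Y_k|^p\le C_p(\nu_{2p}|a_{kk}|^p+\nu_p\E|u_k|^p+\nu_p\E|v_k|^p)$, and Rosenthal's inequality for the independent, mean--zero sums $u_k$ and $v_k$ gives $\E|u_k|^p\le C_p((\sum_{i<k}|a_{ik}|^2)^{p/2}+\nu_p\sum_{i<k}|a_{ik}|^p)$ and similarly for $v_k$. Summing over $k$ and invoking Schur's majorisation theorem (the diagonal of a Hermitian positive semidefinite matrix is majorised by its spectrum, and $t\mapsto t^{p/2}$ is convex for $p\ge 2$) one obtains $\sum_k|a_{kk}|^p\le\mathrm{tr}((\bfA\bfA^*)^{p/2})$, $\sum_k(\sum_i|a_{ik}|^2)^{p/2}\le\mathrm{tr}((\bfA\bfA^*)^{p/2})$, and $\sum_{i,k}|a_{ik}|^p\le\mathrm{tr}((\bfA\bfA^*)^{p/2})$; with $\nu_p^2\le\nu_{2p}$ this gives $\sum_k\E|Y_k|^p\le C_p\,\nu_{2p}\,\mathrm{tr}((\bfA\bfA^*)^{p/2})$, which is of the desired form.

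\textbf{The predictable quadratic variation, and the main obstacle.} For the other term, a similar estimate bounds $\sum_k\E_{k-1}|Y_k|^2$ by $C\nu_4\sum_k|a_{kk}|^2+C\,\bfX^*\bfM\bfX$, where $\bfM=\bfC\bfC^*+\bfE^*\bfE\succeq 0$ and $\bfC,\bfE$ are the strictly upper-- and lower--triangular parts of $\bfA$, so $\mathrm{tr}\,\bfM=\sum_{i\ne j}|a_{ij}|^2\le\mathrm{tr}(\bfA\bfA^*)$. Since $\bfX^*\bfM\bfX\le|\bfX^*\bfM\bfX-\mathrm{tr}\,\bfM|+\mathrm{tr}\,\bfM$, raising to the power $p/2$ and applying the induction hypothesis to $\bfM$ (Hermitian, so $\mathrm{tr}((\bfM\bfM^*)^{q/2})=\mathrm{tr}(\bfM^q)$) at exponent $p/2$ gives
\[
\E(\bfX^*\bfM\bfX)^{p/2}\le C_p\Big[\big(\nu_4\,\mathrm{tr}(\bfM^2)\big)^{p/4}+\nu_p\,\mathrm{tr}\big(\bfM^{p/2}\big)+\big(\mathrm{tr}\,\bfM\big)^{p/2}\Big].
\]
The first and third terms are at most $(\nu_4\,\mathrm{tr}(\bfA\bfA^*))^{p/2}$, since $\mathrm{tr}(\bfM^2)\le(\mathrm{tr}\,\bfM)^2\le(\mathrm{tr}(\bfA\bfA^*))^2$, $(\mathrm{tr}(\bfA\bfA^*)^2)^{p/4}\le(\mathrm{tr}(\bfA\bfA^*))^{p/2}$, and $\nu_4\ge1$. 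The middle term is the crux: one needs $\mathrm{tr}(\bfM^{p/2})\le C_p\,\mathrm{tr}((\bfA\bfA^*)^{p/2})$, equivalently the Schatten--norm bound $\|\bfM\|_{S_{p/2}}\le\|\bfC\|_{S_p}^2+\|\bfE\|_{S_p}^2\le C_p\|\bfA\|_{S_p}^2$, which follows from the classical fact that triangular truncation is bounded on $S_p$ for $1<p<\infty$. Feeding these back, together with $\nu_p\le\nu_{2p}$, closes the induction and yields $\E|\bfX^*\bfA\bfX-\mathrm{tr}\,\bfA|^p\le C_p((\nu_4\,\mathrm{tr}(\bfA\bfA^*))^{p/2}+\nu_{2p}\,\mathrm{tr}((\bfA\bfA^*)^{p/2}))$. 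I expect the only genuinely delicate point to be obtaining the second trace in its sharp form $\mathrm{tr}((\bfA\bfA^*)^{p/2})$ rather than the weaker $(\mathrm{tr}(\bfA\bfA^*))^{p/2}$: this is exactly what forces the use of Schur majorisation in the jump term and of the boundedness of triangular truncation on Schatten classes for the auxiliary matrix $\bfM$; everything else is routine bookkeeping with the constants $\nu_4,\nu_p,\nu_{2p}$.
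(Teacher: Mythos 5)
The paper itself offers no proof of this statement: it is quoted verbatim as Lemma B.26 of \cite{BaiS2010}, so the comparison below is with the standard argument given there. Your proof is correct, and its skeleton --- the martingale-difference decomposition $Y_k=(\E_k-\E_{k-1})(\bfX^*\bfA\bfX)$, the Burkholder--Rosenthal inequality, Rosenthal's inequality for the increments $u_k,v_k$, Schur majorization to convert $\sum_k((\bfA^*\bfA)_{kk})^{p/2}$, $\sum_k|a_{kk}|^p$ and $\sum_{i,k}|a_{ik}|^p$ into $\rtr(\bfA\bfA^*)^{p/2}$, and a dyadic induction applied to the nonnegative definite quadratic form $\bfX^*\bfM\bfX$ coming from the predictable quadratic variation --- is exactly the standard one. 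Where you genuinely depart from it is the term $\nu_p\,\rtr(\bfM^{p/2})$ produced by the induction hypothesis: you close it with the Krein--Gohberg--Macaev theorem that triangular truncation is bounded on the Schatten class $S_p$ for $1<p<\infty$, which gives the sharp $\rtr(\bfM^{p/2})\le C_p\,\rtr(\bfA\bfA^*)^{p/2}$. You have correctly identified this as the one delicate point, and the step is valid; note, however, that it imports a distinctly non-elementary operator-theoretic result into an otherwise elementary probabilistic argument, one that the textbook's self-contained proof does not invoke.

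It is worth recording what each route buys. The elementary substitute $\rtr(\bfM^{p/2})\le(\rtr\bfM)^{p/2}\le(\rtr\bfA\bfA^*)^{p/2}$ closes the induction too, but only yields the weaker conclusion carrying an additional term $C_p\,\nu_p\,(\rtr\bfA\bfA^*)^{p/2}$ on the right-hand side (and this term is genuinely not dominated by the two stated ones for heavy-tailed entries, as a two-scale example with $\bfA=\bfI_n$ shows). Your Schatten-class input is therefore what buys the lemma in the sharp form stated. On the other hand, for every use of the lemma in the present paper the weaker elementary form suffices: after the truncation $|x_{ij}|\le n^{1/4}$ the lemma is applied only with $p\le 4$, e.g.\ in \eqref{ga4}, where the extra $\nu_p(\rtr\bfA\bfA^*)^{p/2}$ would be absorbed by the $(\nu_4\rtr\bfA\bfA^*)^{p/2}$ term. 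So: correct proof, same architecture as the cited source, with one heavier but legitimate external ingredient at precisely the step you flagged as the crux.
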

\begin{lemma}\label{4.2}(Lemma 2.6 of \cite{SilversteinB1995}). Let  $z\in\mathbb{C}^+$ with $v=\Im z$, $\bfA$ and $\mathbf{B}$ $n\times  n$ with $\mathbf{B}$ Hermitian, $\tau\in\mathbb{R}$, and $\mathbf{q}\in\mathbb{C}^N$. Then
\begin{align*}
    |tr((\mathbf{B}-z\bfI)^{-1}-(\mathbf{B}+\tau \mathbf{q}\mathbf{q}^*-z\bfI)^{-1})\bfA|\leq\frac{\|\bfA\|}{v}.
\end{align*}
\end{lemma}
\begin{lemma}(Lemma 8.7 of  \cite{BaiS2010})\lb{4.3}
   Under the assumption in Theorem \ref{th1}, we have
   \begin{align}
    \E|s_n(z)-\E s_n(z)|^{2l}\leq \frac{C}{n^{2l}v^{3l}}.
   \end{align}
\end{lemma}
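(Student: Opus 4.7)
The plan is to prove the estimate by the standard martingale-difference method combined with Burkholder's inequality for the $2l$-th moment of a martingale sum, with the decisive step being a refined conditional second-moment bound for each martingale increment.

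I would introduce the filtration $\mathcal{F}_k = \sigma(x_{ij}:\min(i,j)\le k)$ and let $\E_k = \E[\cdot\mid\mathcal{F}_k]$, so that
$$n(s_n-\E s_n) = \sum_{k=1}^n\gamma_k,\qquad \gamma_k := (\E_k-\E_{k-1})\,tr\bfD^{-1},$$
is a sum of martingale differences. Since $\bfD_k$ does not depend on row/column $k$, one has $\E_k\,tr\bfD_k^{-1}=\E_{k-1}\,tr\bfD_k^{-1}$, and the resolvent identity already used in Section~3 rewrites the increment as
$$\gamma_k = -(\E_k-\E_{k-1})\bigl[\b_k\bigl(1 + n^{-1}\bfa_k^*\bfD_k^{-2}\bfa_k\bigr)\bigr].$$
Lemma~\ref{4.2} applied to this rank-two perturbation yields the deterministic bound $|\gamma_k|\le C/v$.

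The heart of the argument is the refined variance estimate $\E_{k-1}|\gamma_k|^2 \le C/(nv^3)$. To obtain it I would subtract the ``deterministic counterpart'' $\tilde\b_k(1 + n^{-1}\,tr\bfD_k^{-2})$, where $\tilde\b_k = -(z+n^{-1}\,tr\bfD_k^{-1})^{-1}$; this counterpart does not depend on row/column $k$, so it is annihilated by $\E_k-\E_{k-1}$. Expanding $\b_k - \tilde\b_k = \b_k\tilde\b_k(n^{-1}\ga_k - n^{-1/2}x_{kk})$ reduces $\gamma_k$ to linear and bilinear combinations of $\ga_k = \bfa_k^*\bfD_k^{-1}\bfa_k - tr\bfD_k^{-1}$, $\hat\ga_k = \bfa_k^*\bfD_k^{-2}\bfa_k - tr\bfD_k^{-2}$, and $x_{kk}$. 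Lemma~\ref{4.1} controls the conditional second moments of $\ga_k$ and $\hat\ga_k$ in terms of the traces $tr(\bfD_k^{-p}(\bfD_k^{-p})^*)$, and the identity $tr(\bfD_k^{-1}(\bfD_k^{-1})^*) = v^{-1}\Im\,tr\bfD_k^{-1}\le Cn/v$ (with its $\bfD_k^{-2}$ analogue) converts each resolvent norm into a controllable imaginary-part trace, producing the crucial extra $1/n$ saving beyond the naive $1/v^2$ estimate.

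With these ingredients the Burkholder inequality
$$\E\Bigl|\sum_{k=1}^n\gamma_k\Bigr|^{2l} \le C_l\E\Bigl(\sum_{k=1}^n\E_{k-1}|\gamma_k|^2\Bigr)^l + C_l\sum_{k=1}^n\E|\gamma_k|^{2l}$$
delivers the conclusion. Indeed $\sum_k\E_{k-1}|\gamma_k|^2 \le n\cdot C/(nv^3) = C/v^3$, so the first term is $\le C_l/v^{3l}$; and the interpolation $\E|\gamma_k|^{2l}\le (C/v)^{2l-2}\E_{k-1}|\gamma_k|^2 \le C/(nv^{2l+1})$ gives the second term $\le C_l/v^{2l+1}\le C_l/v^{3l}$ since $v\le 1$. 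Dividing by $n^{2l}$ produces $\E|s_n-\E s_n|^{2l}\le C/(n^{2l}v^{3l})$. The main obstacle is the refined second-moment estimate: the deterministic bound $|\gamma_k|\le C/v$ alone is off by a factor of $(nv)^l$, and extracting the crucial $1/n$ saving requires the careful algebraic bookkeeping outlined above, pairing each resolvent-norm factor $v^{-1}$ with an $n^{-1}$ from either a Lemma~\ref{4.1} concentration or the $\Im\,tr\bfD_k^{-1}$ identity.
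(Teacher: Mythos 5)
The paper itself gives no proof of this lemma: it is quoted verbatim as Lemma 8.7 of \cite{BaiS2010}, so there is no in-paper argument to compare against. Your strategy (martingale decomposition over the filtration $\mathcal{F}_k$, the rank-one trace identity, Lemma \ref{4.1} for the quadratic forms, Burkholder) is indeed the standard route behind the cited result, and your final Burkholder arithmetic is fine \emph{granted} the increment bounds. But there is a genuine gap exactly at what you call the heart of the argument, the conditional variance estimate $\E_{k-1}|\gamma_k|^2\le C/(nv^3)$.

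That estimate is justified in your sketch by ``$tr(\bfD_k^{-1}(\bfD_k^{-1})^*)=v^{-1}\Im\, tr\bfD_k^{-1}\le Cn/v$'', i.e.\ by $\Im\, tr\bfD_k^{-1}\le Cn$. This is not a free pathwise bound: each diagonal resolvent entry only gives $\Im\, tr\bfD_k^{-1}\le n/v$, hence $tr(\bfD_k^{-1}(\bfD_k^{-1})^*)\le n/v^2$, and with the equally trivial bound $|\tilde\beta_k|\le 1/v$ (rather than the $O(1)$ your computation implicitly uses) the dominant term $n^{-2}|\tilde\beta_k|^2\E_{k-1}|\hat\ga_k|^2$ is only $O(1/(nv^6))$, which after summation and Burkholder gives $n^{-2l}v^{-6l}$ — worthless at the scale $v\asymp n^{-1/2}$ where the lemma is actually applied in \eqref{f} and Lemma \ref{l1}. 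The missing inequality $\Im\, tr\bfD_k^{-1}=O(n)$ down to $v\asymp n^{-1/2}$ is essentially a local semicircle law, of the same depth as the rate one is trying to prove, so it cannot be invoked without proof; and since these quantities sit under $\E_{k-1}$ inside the square function, you cannot simply replace them by full expectations either. A correct implementation (this is where the cited proof does its real work) controls the random factors via $\Im s_n\le |\E s_n|+|s_n-\E s_n|\le C+|s_n-\E s_n|$ (or $\Im s_n\le C+C\|F^{\bfW_n}-F\|/v$ with a-priori rate input), handles $\b_k,\tilde\beta_k$ by a truncation on $\{|\b_k|>2\}$ exactly as in the paper's Lemma \ref{3.2}, and then closes the resulting self-referential moment inequality by induction on $l$ (the case $l=1$ serving as the base). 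Without that bootstrapping layer your sketch does not yield the exponent $v^{-3l}$.
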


\end{document}